\date{}
\title{Long monotone trails in random edge-labelings of random graphs}
\author{Omer Angel\thanks{University of British Columbia, supported in part by NSERC. Email: {\tt angel@math.ubc.ca}}
  \and Asaf Ferber \thanks{Massachusetts Institute of Technology. Department of Mathematics. Email: {\tt ferbera@mit.edu}. Research is partially supported by an NSF grant 6935855.}
  \and Benny Sudakov\thanks{Department of Mathematics, ETH, 8092 Zurich, Switzerland. Email: {\tt benjamin.sudakov@math.ethz.ch}
    Research supported in part by SNSF grant 200021-175573.}
  \and Vincent Tassion\thanks{Department of Mathematics, ETH, 8092 Zurich, Switzerland. Email: {\tt vincent.tassion@math.ethz.ch}
    Research supported in part by the NCCR SwissMAP, funded by the Swiss National Science Foundation.}}
\theoremstyle{plain}
\newtheorem{theorem}{Theorem}[section]
\newtheorem{lemma}[theorem]{Lemma}
\newtheorem{claim}[theorem]{Claim}
\newcommand{\eps}{\varepsilon}
\renewcommand{\P}{\mathbb{P}}
\newcommand{\E}{\mathbb{E}}
\begin{document}
\maketitle
\begin{abstract}
  Given a graph $G$ and a bijection $f : E(G)\rightarrow \{1, 2, \ldots,e(G)\}$, we say that a trail/path in $G$ is $f$-\emph{increasing} if the labels of consecutive edges of this trail/path form an increasing sequence. More than 40 years ago Chv\'atal and Koml\'os raised the question of providing the worst-case estimates of the length of the longest increasing trail/path over all edge orderings of $K_n$. The case of a trail was resolved by Graham and Kleitman, who proved that the answer is $n-1$, and the case of a path is still wide open. Recently Lavrov and Loh proposed to study the average case version of this problem in which the edge ordering is chosen uniformly at random. They conjectured (and it was proved by Martinsson) that such an ordering with high probability (whp) contains an increasing Hamilton path.

  In this paper we consider the random graph $G=G_{n,p}$ with an edge ordering chosen uniformly at random. In this setting we determine whp the asymptotics of the number of edges in the longest increasing trail. In particular we prove an average case version of the result of Graham and Kleitman, showing that the random edge ordering of $K_n$ has whp an increasing trail of length $(1-o(1))en$ and this is tight. We also obtain an asymptotically tight result for the length of the longest increasing path for random Erd\H{o}s-Renyi graphs with $p=o(1)$.

  MSC subject classification: 05C38, 05C80
\end{abstract}

\section{Introduction}	
	
A \emph{trail} in a graph $G$ is a sequence of vertices $v_1,\ldots,v_t$ such that $v_i$ is adjacent to $v_{i+1}$ for all $i$, and no edge appears more than once. A \emph{path} is a trail where no vertex is repeated. Given a graph $G$ and a bijection $f : E(G)\rightarrow \{1, 2, \ldots,e(G)\}$, we say that a trail in $G$ whose edges (in consecutive order) are $(e_1, e_2, \ldots , e_k)$ is $f$-\emph{increasing} if the labels $f(e_1), f(e_2), \ldots, f(e_k)$ form an increasing sequence. Let $m(G)$ denote the largest integer $k$ for which every bijection $f:E(G)\rightarrow \{1,2,\ldots,e(G)\}$ gives an $f$-increasing path of length $k$, and $m^*(G)$ denote the largest integer $k$ for which every such $f$ gives an $f$-increasing trail of length $k$.

The problem of proving worst-case estimates for the length of the longest increasing trail/path in graphs goes back more than 40 years to Chv\'atal and Koml\'os \cite{CK}. In 1971 they asked to determine $m(K_n)$ and $m^*(K_n)$ for the complete graph on $n$ vertices $K_n$. For trails this problem was resolved by Graham and Kleitman, who showed that $m^*(K_n)=n-1$ unless $n\in \{3,5\}$ (in these cases $m^*(K_n)=n$). Graham and Kleitman \cite{GK} actually proved a lower bound for general graphs. Namely, they showed that every graph of average degree $d$ satisfies $m^*(G)\geq d$ (in particular, this implies $m^*(K_n)\ge n-1$). 

The problem of determining $m(G)$ and $m^*(G)$ for a general graph $G$ appears to be quite challenging. In particular, even in the case $G=K_n$, the lower and upper bounds for the length of the longest increasing path are still quite far apart. An old lower bound of Graham and Kleitman \cite{GK}, of order $\sqrt{n}$, was improved only in 2015 by Milans \cite{M} to $m(K_n)\geq n^{2/3}/\log^Cn$.
Very recently, a nearly linear lower bound $m(K_n)\geq n^{1-o(1)}$ was proved in \cite{B+}. For the upper bound, an old construction of Calderbank, Chung and Sturtevant from the 1980's \cite{C} gives $m(K_n)\leq (1+o(1))\frac n2$ and there were no improvements since then. There are also many results considering $m(G)$ and $m^*(G)$ for other graphs rather than $K_n$. The interested reader is referred to \cite{A,RSY,R,S,Y} and the references therein.

Rather than studying the worst case scenario, it is also natural to investigate the average case of the increasing trail/path problem, i.e., with respect to random edge labeling. Let $G$ be a graph on $n$ vertices and let $f: E(G)\rightarrow \{1,\ldots,e(G)\}$ be a bijection chosen uniformly at random. What can we say about the length of the longest $f$-increasing trail/path in $G$? This interesting question was raised by Lavrov and Loh \cite{LL}. They conjectured, and later Martinsson \cite{Mart} proved, that the uniform random edge ordering of $K_n$ whp (that is, with probability tending to $1$ as $n$ tends to infinity) contains an increasing path of length $n-1$, which is obviously best possible. What about the longest increasing trail in the random edge ordering of $K_n$? In this paper we answer this question.

Our results are more general and we consider increasing path/trail problems in the random graph setting. Let $G=G_{n,p}$ be a graph on $n$ vertices in which every pair $xy$ is an edge randomly and independently with probability $p$. Note that when $p=1$ we get the complete graph $K_n$. Expose the edges of $G=G_{n,p}$ and let $f:E(G)\rightarrow \{1,2,\ldots,e(G)\}$ be a random bijection. What can one say about the asymptotics of the length of the longest increasing path/trail for typical $G$ and $f$? To make the discussion a bit more formal, let $X_{k,p}$ and $Y_{k,p}$ be random variables which count the number of increasing paths and trails, respectively, of length $k$ in $G=G_{n,p}$ with respect to random edge ordering. It is is easy to check  that
$$\mathbb{E}[X_{k,p}]=\binom{n}{k+1}(k+1)!p^{k}\frac{1}{k!},$$
and
$$\mathbb{E}[Y_{k,p}]\leq n^{k+1}p^{k}\frac{1}{k!}.$$

Using Stirling's formula one can check that for any fixed $\varepsilon>0$ and $p=\omega(\log n/n)$, the expectation of $Y_{k,p}$ tends to $0$ for $k\geq (1+\varepsilon)enp$. By Markov's inequality, this implies that whp the longest increasing trail has length at most $(1+\varepsilon)enp$. Our first theorem shows that this bound is (asymptotically) tight.

\begin{theorem}\label{main}
  Let $\varepsilon>0$ be fixed, let $p=\omega(\log n/n)$, let $G=G_{n,p}$,  and let $f:E(G)\to \{1,\ldots,e(G)\}$ be a uniformly random edge ordering of $G$.
  Then, whp the longest increasing trail has length at least $(1-\varepsilon)enp$.
\end{theorem}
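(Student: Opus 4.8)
The plan is to recast the problem as a first-passage / reachability question and then analyze the resulting front propagation. Model the random ordering by i.i.d.\ $\mathrm{Unif}[0,1]$ labels $\lambda(e)$, and for a vertex $v$ and integer $\ell$ let $T_\ell(v)$ be the smallest value, over all increasing trails of length $\ell$ ending at $v$, of the label of the last edge (with $T_0(v)=0$, and $T_\ell(v)=\infty$ if no such trail exists). One then has the Bellman-type recursion $T_{\ell+1}(v)=\min_{w\sim v}\{\lambda(vw):\lambda(vw)>T_\ell(w)\}$. The crucial simplification is that any increasing-labelled walk automatically has distinct edges (strictly increasing labels are distinct), so increasing walks and increasing trails coincide and this recursion \emph{exactly} computes trail reachability. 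It therefore suffices to show that whp $\min_v T_{k}(v)<1$ for $k=(1-\varepsilon)enp$, i.e.\ that some vertex is reachable by a length-$k$ increasing trail before time $1$.

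To locate the threshold, let $x_\ell(t)$ be the fraction of vertices $v$ with $T_\ell(v)\le t$. Revealing labels in increasing time order, a vertex not yet reachable at level $\ell+1$ acquires such a trail at rate roughly $np\,x_\ell(t)$: it sees new incident edges at rate $\approx np$, each landing on a currently level-$\ell$ vertex with probability $\approx x_\ell(t)$. This suggests the mean-field recursion $\dot x_{\ell+1}(t)=np\,x_\ell(t)\,(1-x_{\ell+1}(t))$ with $x_0\equiv 1$, whose small-density solution is $x_\ell(t)\approx (npt)^\ell/\ell!$ — exactly matching the first-moment count. By Stirling, $(np)^\ell/\ell!$ stays well above $1/n$ for every $\ell\le(1-\varepsilon)enp$ and only drops below $1$ once $\ell$ exceeds $(1+o(1))enp$. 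Hence the reachability front travels with speed $enp$: a constant fraction of vertices is reachable at every length up to $(1-\varepsilon)enp$ by time $1$, and in particular $\min_vT_k(v)<1$ with enormous slack.

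To make this rigorous I would run the argument along the natural time-filtration. Exposing edges and labels in order of increasing label, the event that $w$ lies at level $\ell$ by time $t$ is measurable with respect to the labels $\le t$, whereas the presence and exact value of the label of any edge $vw$ lying in $(t,t+dt]$ is independent of that past. This decouples the one-step extension and lets me prove, by induction on $\ell$ (and over a fine partition of $[0,1]$ into windows), a one-sided concentration statement: in each window the set of newly reachable vertices grows by at least its conditional expectation, up to a Chernoff/Azuma error. The hypothesis $p=\omega(\log n/n)$ enters exactly here, guaranteeing that all relevant degrees and edge-counts are $(1+o(1))np$ and that the binomial counts governing the growth have mean $\gg\log n$, so that a union bound over the $O(np)=O(n)$ levels, the $O(1)$ windows, and the $n$ vertices is affordable.

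The main obstacle is turning the heuristic recursion into a rigorous lower bound on the front position while faithfully handling the fact that a single random graph is reused across all levels: the sets $\{v:T_\ell(v)\le t\}$ are highly correlated, and one must ensure the frontier does not prematurely collapse (I would track a set that is \emph{provably} large at each level rather than the exact mean-field density, and absorb the loss coming from the strict-inequality constraint and from the $(1-x_{\ell+1})$ saturation term into constants). What makes this feasible is precisely the exponential slack at level $(1-\varepsilon)enp$: it is enough to follow the front to within a constant factor of its mean-field trajectory, so crude but robust per-window concentration bounds, chained through the time-filtration, will close the argument and deliver the matching lower bound.
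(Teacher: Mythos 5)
Your reformulation is correct and attractive: the observation that increasing walks are automatically trails makes the recursion $T_{\ell+1}(v)=\min\{\lambda(vw):\lambda(vw)>T_\ell(w)\}$ exact, the filtration by label value does decouple the one-step extension, and the mean-field system $\dot x_{\ell+1}=np\,x_\ell(1-x_{\ell+1})$ does have pulled-front speed $enp$ (minimize $np\,e^{\gamma}/\gamma$ over $\gamma$). But the proposal has a genuine gap exactly where the constant $e$ is decided: at the leading edge of the front. The speed $enp$ is \emph{pulled}, i.e.\ it is produced by the levels $\ell$ slightly ahead of the bulk, where the density $x_\ell(t)\approx e^{-(\ell-enpt)}$ is exponentially small and the sets $A_\ell(t)$ contain too few vertices for any Chernoff/Azuma window estimate to hold. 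Your plan is to ``track a set that is provably large at each level,'' i.e.\ to truncate the profile at some density threshold $\delta$ below which you cannot concentrate. A truncated front provably travels \emph{strictly slower} than $enp$: if you only use levels once they reach constant density $\delta$, the recursion $y_{\ell+1}(t)\ge 1-\exp(-np\,\delta\,(t-\tau_\ell))$ gives $\tau_{\ell+1}-\tau_\ell\ge 1/np$, i.e.\ speed $np$ --- the factor $e$ is lost entirely. This is the front-propagation avatar of the phenomenon the paper flags explicitly: the naive second moment gives the constant $2$ instead of $e$. The ``exponential slack'' you invoke (the first moment at level $(1-\varepsilon)enp$ being huge) does not rescue this, since a large first moment is perfectly consistent with the true front sitting at $np$; the slack lives in the bulk, while the speed is set at the edge.

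To close the argument along your lines one would need a genuinely new ingredient: either (i) a Brunet--Derrida-type cutoff analysis, choosing the threshold $\delta=\delta(\varepsilon)$ small and exhibiting an explicit subsolution profile for the truncated recursion whose speed is $(1-\varepsilon)enp$ (the correction is of order $np/\log^2(1/\delta)$, so a constant threshold can work, but proving it propagates is real work, not ``crude but robust concentration''); or (ii) a second-moment argument localized at the leading edge with a restriction on the label trajectory $X_1,\dots,X_k$ that kills the correlated contributions. The paper takes route (ii): it decomposes the label range into blocks, grows high-girth bounded-degree trees in each block (Lemmas~\ref{short cycles} and~\ref{cor:high girth}), and in Lemma~\ref{monotone path in one step} runs a second moment over ``good'' paths satisfying $X_i\ge \frac ik X_k$ --- a condition that, by Spitzer's lemma, costs only a factor $1/k$ in the first moment but tames the second moment; the blocks are then stitched by Claim~\ref{claim1}. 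In your language, the good-path condition is precisely the statement that the trail's trajectory never strays ahead of the front, which is the constraint your truncation imposes but whose cost you have not controlled. As written, your argument establishes speed $np$ rigorously and $enp$ only heuristically.
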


\noindent
When $p=1$ this theorem gives an analog for trails of the above mentioned result of Martinsson, showing that
the longest increasing trail in the random edge ordering of $K_n$ whp has length at least $(1-\varepsilon)en$. Compared with the result of
Graham and Kleitman it shows that a random ordering differs by a factor of $e$ from the worst case scenario.

For $p=o(1)$ our proof gives a bit more. In this regime we can actually produce not only a trail but a path of a similar length.  This gives the following result, which is tight, since the longest increasing path is not longer than the longest trail.

\begin{theorem}\label{main:paths}
  Let $\varepsilon>0$ be fixed, let $\log n/n \ll p \ll 1$, let $G=G_{n,p}$, and let $f:E(G)\to \{1,\ldots,e(G)\}$ be a uniformly random edge ordering of $G$.
  Then, whp the longest increasing path has length at least $(1-\varepsilon)enp$.
\end{theorem}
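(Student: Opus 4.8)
The plan is to reuse the trail-building argument behind Theorem~\ref{main} and to observe that, in the sparse regime $p=o(1)$, it can be run so as to never reuse a vertex, thereby producing a \emph{path} of essentially the same length. I am assuming the structure typical of such constructions: one exposes the edges of $G$ in increasing label order and grows increasing trails by repeatedly extending a current endpoint $w$ along an edge whose label exceeds all labels used so far, where the factor $e$ over the naive bound $np$ comes from exploring an exponentially branching family of partial trails and keeping the longest. I would modify this exploration by attaching to each partial trail under consideration the set $U$ of vertices it has already used, and forbidding extensions back into $U$; the resulting object is then automatically a path. Note that a pure black-box reduction from Theorem~\ref{main} is unattractive here, since an arbitrary increasing trail of length $(1-\varepsilon)enp$ may revisit vertices in a way that destroys length under pruning, so modifying the construction is the cleaner route.

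The heart of the matter is to show that forbidding $U$ is essentially free. At any stage the relevant partial path has length $O(np)$, so $|U|=O(np)$. When $np\gg\log n$, standard concentration gives that whp every vertex $w$ has degree $(1+o(1))np$, and its neighbourhood is (up to the conditioning discussed below) a $p$-random subset of the vertex set; hence the expected number of neighbours of $w$ lying in the forbidden set $U$ is at most $(1+o(1))|U|p=O(np^2)$. Since $p=o(1)$ we have $np^2/(np)=p\to 0$, so $np^2=o(np)$, and thus all but a $o(1)$-fraction of the available extensions at every step still lead to fresh vertices. Feeding this back into the branching analysis of Theorem~\ref{main} shows that the presence of $U$ alters the effective branching rate by only a factor $1-o(1)$, which leaves the leading constant $e$ untouched, and therefore yields an increasing path of length $(1-\varepsilon)enp$ whp. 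Tightness is immediate, as noted after the statement, because every path is a trail.

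The main obstacle is the dependency between the forbidden set $U$ and the edges the exploration is about to reveal: because $U$ is produced by the earlier steps of the same edge-exposure process, the pairs between $w$ and $U$ are not fresh, so the clean estimate ``expected overlap $\le|U|p$'' must be justified under the conditioning created by the process. I would handle this by exposing neighbourhoods in a way that keeps the relevant randomness independent of the history---for instance via a two-round (sprinkling) exposure, splitting $p$ into $p'+p''$ with $p'=(1-o(1))p$ so that a fresh $p''$-random graph is always available for extensions---and by taking a union bound over the $O(np)$ extension steps so that $|U\cap N(w)|=o(np)$ holds \emph{simultaneously} at every step, not merely in expectation. The uniform degree concentration of $G_{n,p}$ for $np\gg\log n$ then keeps the branching analysis valid throughout, completing the adaptation of the proof of Theorem~\ref{main} to the path setting.
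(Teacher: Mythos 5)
Your proposal is correct and takes essentially the same route as the paper: the construction behind Theorem~\ref{main} is rerun while forbidding the set of already-used vertices, and since the target length $enp$ is $o(n)$ when $p=o(1)$, this forbidden set removes only an $o(1)$ fraction of the available vertices at every extension step, leaving the constant $e$ untouched. The ``freshness'' issue you raise is handled in the paper by its decomposition of the label set into disjoint consecutive intervals (so each extension step uses edges whose labels have not yet been examined), which plays exactly the role of your two-round sprinkling.
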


\noindent
Note that the above theorem does not cover the regime of $p$ being a constant. The case $p=1$ is covered by the main result in \cite{M}, and unfortunately, for $p=\Theta(1)$ our proof only gives paths of length around $(1-e^{-ep}-o(ep))n$. It would be interesting to derive an (asymptotically) optimal result also for constant $p$, and we leave this as an open problem.

Finally we remark that for the very sparse regime when $p=c/n, c>1$, it is easy to prove that the answer is $k=(1-o(1))(\log n/\log\log n)=\omega(np)$. Indeed, it is well known that whp $G_{n,p}$ contains a path of length $\Theta(n)$ (for more details, see e.g. \cite{Bol}). Expose $G$, fix such a path and cut it into $\Theta(n/k)$ edge-disjoint subpaths of length $k=\Theta(\log n/\log\log n)$ each. Now, by exposing $f$, the probability for each such subpath to become increasing is exactly $\frac{2}{k!}$ (there are two possible orientations) and the subpaths are mutually independent with respect to the property `being increasing'. Now, observe that as the expected number of increasing subpaths is $\Theta\left(\frac{n}{k\cdot k!}\right)=\omega(1)$, one can use Chernoff's bound (or the law of large numbers) to conclude that whp at least one such subpath is increasing. On the other hand,  if $k=(1+\varepsilon)(\log n/\log\log n)$ then $\mathbb{E}[Y_{k,p}]=o(1)$. Thus by Markov's inequality whp, there is no increasing trail (and hence no increasing path) of length $k$.

\section{Auxiliary results}

In this section we state (and prove) few lemmas that we need in the proofs of our main results. First, we show that a typical $G_{n,p}$ does not contain too many `short' cycles. All the results are asymptotic as $n$ tends to infinity.

\begin{lemma}\label{short cycles}
Let $p\gg 1/n$. Then, whp the number of cycles of length at most $k$ in $G_{n,p}$ is at most $(np)^{k+1}$.
\end{lemma}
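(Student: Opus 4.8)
The plan is a routine first-moment computation followed by Markov's inequality, so I expect no serious obstacle. Let $Z$ denote the number of cycles of length at most $k$ in $G_{n,p}$, and write $Z=\sum_{\ell=3}^k Z_\ell$, where $Z_\ell$ counts the cycles of length exactly $\ell$. First I would compute $\E[Z]$ by linearity of expectation, summing over all potential cycles. The number of potential cycles of length $\ell$ on $n$ labelled vertices is $\binom{n}{\ell}\frac{(\ell-1)!}{2}$ (choose the vertex set, then arrange it cyclically, dividing by $\ell$ rotations and $2$ reflections), and each such cycle appears in $G_{n,p}$ precisely when its $\ell$ edges are all present, which has probability $p^\ell$. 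Hence
\[
\E[Z_\ell]=\binom{n}{\ell}\frac{(\ell-1)!}{2}\,p^\ell\le \frac{n^\ell}{\ell!}\cdot\frac{(\ell-1)!}{2}\,p^\ell=\frac{(np)^\ell}{2\ell}.
\]

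Summing over $3\le \ell\le k$ gives $\E[Z]\le \sum_{\ell=3}^k \frac{(np)^\ell}{2\ell}\le \sum_{\ell=3}^k (np)^\ell$. The only place the hypothesis $p\gg 1/n$ enters is here: it guarantees $np\to\infty$, so this geometric-type sum is dominated by its largest term. Concretely, for $n$ large enough that $np\ge 2$ one has $\sum_{\ell=3}^k (np)^\ell\le (np)^k\cdot\frac{np}{np-1}\le 2(np)^k$, and therefore $\E[Z]\le (np)^k$ (indeed with room to spare, since the suppressed factor $\frac{1}{2\ell}\le\frac16$ could be used to sharpen the constant).

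Finally I would apply Markov's inequality to the nonnegative integer-valued random variable $Z$:
\[
\P\bigl[Z\ge (np)^{k+1}\bigr]\le \frac{\E[Z]}{(np)^{k+1}}\le \frac{(np)^k}{(np)^{k+1}}=\frac{1}{np}=o(1),
\]
where the final estimate again uses $np\to\infty$. Thus whp the number of cycles of length at most $k$ is strictly less than $(np)^{k+1}$, as claimed. The argument is valid whether $k$ is fixed or grows with $n$, since the bound $\frac{np}{np-1}\le 2$ on the geometric ratio is uniform in the number of summands; the single mild point requiring care is simply confirming that the top term of the sum dominates, which is immediate from $p\gg 1/n$.
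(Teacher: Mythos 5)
Your proposal is correct and follows the paper's proof exactly: the same first-moment count $\E[Z_\ell]=\binom{n}{\ell}\frac{(\ell-1)!}{2}p^\ell\le\frac{(np)^\ell}{2\ell}$, the same observation that $np\to\infty$ makes the sum dominated by its top term so $\E[Z]\le(np)^k$, and the same application of Markov's inequality at threshold $(np)^{k+1}$ to get failure probability $O(1/np)=o(1)$. No differences worth noting.
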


\begin{proof}
Let $X_k$ denote the random variable counting the number of cycles of length at most $k$ in $G_{n,p}$. Clearly,
  \[
    \E[X_k]=\sum_{\ell=3}^k\binom{n}{\ell}\cdot \frac{(\ell-1)!}{2}p^{\ell}\leq \sum_{\ell=3}^k(np)^{\ell}\cdot \frac{1}{2\ell}\leq (np)^{k}.
  \]
  Since $p \gg 1/n$, the result now follows from Markov's inequality.
\end{proof}

For $0\le m\le \binom n 2$, let $G_{n,m}$ be a random graph on $n$ vertices with exactly $m$ edges, chosen uniformly at random among all such graphs. We make use of Lemma \ref{short cycles} in order to prove that $G_{n,m}$ typically contains a 'large' subgraph with `large' girth and `large' minimum degree.

\begin{lemma}\label{cor:high girth}
  Let $\log^{0.5}n/n\leq p\leq \log^2 n/n$ and $m=\binom{n}{2}p$. Then, the random graph $G_{n,m}$ whp contains a subgraph $H\subseteq G_{n,m}$ such that:
  \begin{enumerate}
    \item $|V(H)|\geq (1-o(1))n$,
    \item $\delta(H)\geq (1-o(1))np$, and
    \item $H$ has girth at least 
      $\frac{\log n}{2\log np}$.
  \end{enumerate}
\end{lemma}

\begin{proof}
  It is more convenient to work with the $G_{n,q}$ model. Let $q=p-p/\log^2n$, and observe that whp we have $e(G_{n,q})\leq m$ (this follows immediately from Chernoff's bounds). Therefore, one can easily couple $G_{n,q}$ as a subgraph of $G_{n,m}$ (by simply adding $m-e(G_{n,q})$ randomly selected edges to $G_{n,q}$). To prove the lemma, we show that $G_{n,q}$ whp contains a subgraph $H$ satisfying the required properties; then, whp $H\subseteq G_{n,q}\subseteq G_{n,m}$. Note that as $p=(1+o(1))q$, we can exchange them in our computations to obtain Properties 1.-3. with respect to $p$ instead of $q$, so let $G=G_{n,q}$.

  First, note that whp $e(G)=(1/2+o(1))n^2q$. Fix $k<\frac{\log n}{2\log np}$. It follows from Lemma \ref{short cycles} that whp one has at most $(nq)^{k+1}$ cycles of length at most $k$ in $G_{n,q}$. Therefore, by deleting one vertex from each such cycle we obtain a subgraph $G'$ satisfying Properties $1$ and $3$ of the lemma. Denote by $V'$ the set of deleted vertices. By construction, whp we have
  \begin{equation}
   \label{eq:4}
   |V'|\le (nq)^{k+1}=\exp((k+1)\log nq)\leq \exp(\frac12\log n+\log nq) \leq \sqrt{n}\log^2 n.
 \end{equation}
 Observe that as every subgraph of a graph of girth at least $k$ also has girth at least $k$, it is enough to show that there exists $H\subseteq G'$ with $\delta(H)\geq (1-o(1))nq$ and with $|V(H)|\geq (1-o(1))n$. To do so, fix $\varepsilon>0$ and consider the following process. Let $V''$ be the set of all vertices in $G$ with degree at most $(1-\varepsilon)nq$ and let $V_0=V' \cup V''$. Now, as long as there exists a vertex $v$ in $V(G)\setminus V_i$ with degree at least $\varepsilon nq$ into $V_i$, do the following. Let $v$ be such a vertex, and define $V_{i+1}:=V_i\cup \{v\}$. We show that this process must terminate after at most (say) $\ell=n/\log n$ iteration. To this end let us note that by Chernoff's bounds and Markov's inequality, one can easily obtain that whp
  \begin{equation}
|V''|=n\cdot \exp(-\Theta(nq))\leq \frac{n}{e^{\log^{0.4}n}}.\label{eq:6}
\end{equation}
Using \eqref{eq:4} and \eqref{eq:6}, we see that after $\ell$ steps we obtain a set $V_\ell$ with at most $|V_0|+\ell\leq 2\ell$ vertices, and with at least $\varepsilon nq\ell$ edges. We show that this is impossible in $G_{n,q}$. Indeed, given a subset $X\subseteq V(G_{n,q})$ of size $\ell \leq |X|\leq 2\ell$, the number of edges in $G_{n,q}[X]$ is distributed as $\text{Bin}(\binom{|X|}{2},q)$. Therefore, the probability to have at least $\varepsilon nq\ell$ edges in $G_{n,q}[X]$ is at most
  $$\binom{|X|^2}{\varepsilon nq\ell}q^{\varepsilon nq\ell}\leq \left(\frac{e|X|^2q}{\varepsilon nq\ell}\right)^{\varepsilon nq\ell}\leq \left(\frac{\ell}{n}\right)^{\varepsilon nq\ell/2}\leq e^{-0.5\varepsilon nq\ell\log \frac{n}{\ell}}.$$

  Now, by applying the union bound to all subsets of sizes between $\ell$ to $2\ell$, as there are at most $2\ell \binom{n}{2\ell}= e^{O(\ell \log \frac{n}{\ell})}$ of them, we obtain that there is no such subset $V_{\ell}$.

  In order to complete the proof, let $s$ be the last step of the above process, and let $H:=G'\setminus V_{s}$. Then we can easily check that whp $|V(H)| \ge n(1-3/\log n)$,  $\delta(H)\ge (1-2\varepsilon)qn=(1-2\varepsilon-o(1))pn$ and $H$ has girth larger than $k$ (since it is a subgraph of $G'$).
\end{proof}

The next lemma, which might be of independent interest, studies increasing paths in random edge labelings of trees. Before stating it, we need to introduce some notation. Let $T^k_D$ be the rooted $D$-ary tree with $k$ levels (that is, there is a root $r$ of degree $D$, and each of its neighbors has $D$ descendants and so on for $k$ levels, where the last level are leaves). Here we prove an asymptotically best possible dependency between $k$ and $D$ for which a random labeling of the edges of $T_D^k$ whp has an increasing path from the root to some leaf. Our proof relies on standard methods in the study of branching random walks. More precisely, we apply a  second moment method and use a truncation argument similar to the one appearing e.g.\@ in \cite{DR,McD}.
Here, the terminology whp refers to the asymptotic behavior as $D$ tends to infinity.

\begin{lemma}
  \label{monotone path in one step}
  Fix $\varepsilon>0$, and let  $k\leq (1-\varepsilon)eD$. Then, a random uniform labeling of $E(T^k_D)$ whp results in an increasing path from $r$ to some leaf.
\end{lemma}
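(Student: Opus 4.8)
The plan is to analyze increasing paths from the root using the second moment method on a branching random walk, where the "walk" is the sequence of edge labels along a root-to-leaf path. Since the labeling is a uniform bijection on $E(T_D^k)$, it is convenient to first replace the discrete labels by i.i.d.\ continuous marks: assign each edge an independent $\mathrm{Unif}[0,1]$ weight, so that the induced relative order is a uniform bijection. A root-to-leaf path is then increasing precisely when its sequence of weights is increasing. For a single fixed path of length $k$, the probability that its weights are increasing is $1/k!$, and the weights along the path form a random increasing sequence in $[0,1]$. I would parametrize by the value $x\in[0,1]$ attained at the final (leaf) edge and track, via a first-moment computation, the expected number of increasing paths whose last edge has weight at most $x$. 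Writing $Z$ for the number of increasing root-to-leaf paths, one checks that $\E[Z] = D^k/k!$, which for $k \le (1-\eps)eD$ is $\exp(\Theta(D)) \to \infty$; this is exactly the regime where $k! \approx (k/e)^k \le ((1-\eps)D)^k$ makes the expected count blow up.

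First I would set up the truncated second moment. The obstacle with a naive second moment is that the variance is dominated by pairs of paths that branch off late (i.e.\ share a long common prefix), because conditioned on a long increasing prefix the remaining weights are atypically large, and such heavy-prefix paths contribute disproportionately to $\E[Z^2]$ while being rare. The standard remedy, following the truncation arguments of \cite{DR,McD}, is to restrict attention to paths that are \emph{ballot-like}: I would only count increasing paths whose partial weight sequence stays below a suitable barrier at every level, for instance requiring that the weight of the $j$-th edge be at most roughly $j/k$ (up to a small additive slack), which is where a typical increasing sequence lies. Let $\tilde Z$ count the increasing root-to-leaf paths satisfying this barrier condition at all levels. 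The barrier kills the pathological heavy-prefix contributions to the second moment without changing the first moment by more than a constant factor, so that $\E[\tilde Z] = \Theta(\E[Z]) \to \infty$.

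Next I would carry out the two-moment estimate for $\tilde Z$. For the first moment, I integrate the density of a constrained increasing sequence and verify $\E[\tilde Z] \ge c\, D^k/k!$ survives the truncation. For the second moment, I write $\E[\tilde Z^2] = \sum_{P,Q} \P[\text{both } P,Q \text{ good}]$ and split the sum according to the length $i$ of the common prefix of the two paths $P,Q$. Conditioned on the shared prefix ending at weight $u$ at level $i$ (with $u$ controlled by the barrier), the two suffixes are conditionally independent increasing continuations from $u$, each of length $k-i$ over a $(D-1)$-ary subtree, with probability governed by $\frac{(1-u)^{k-i}}{(k-i)!}$-type factors. Summing the product of branching factors $D^{i}\cdot D^{2(k-i)}$ against these conditional probabilities, the barrier constraint $u \lesssim i/k$ ensures the $i$-indexed terms are summable and that the total is $\E[\tilde Z^2] \le (1+o(1))\E[\tilde Z]^2 + \E[\tilde Z]$, i.e.\ the off-diagonal is dominated by the square of the mean. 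The Paley--Zygmund inequality then gives $\P[\tilde Z > 0] \ge \E[\tilde Z]^2/\E[\tilde Z^2] = 1-o(1)$, so whp an increasing root-to-leaf path exists.

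The main obstacle I expect is choosing the barrier correctly and making the second-moment sum over the prefix length $i$ converge with the right constant. The tension is between the exponential branching gain $D^{2(k-i)}$ in the number of suffix pairs and the factorial suppression from requiring both suffixes to be increasing from a common base weight $u$; the barrier is precisely the device that keeps $u$ from being so large that the suffix-increase probabilities inflate the overlap terms. Getting the slack in the barrier small enough that $\E[\tilde Z]$ does not collapse, yet large enough that typical increasing paths are retained, is the delicate point, and the constant $e$ in the threshold $k \le (1-\eps)eD$ is exactly what emerges from balancing $k! \sim (k/e)^k$ against $D^k$ in these estimates. The rest is a routine, if careful, computation of Gaussian/Poisson-type integrals for the constrained increasing sequences.
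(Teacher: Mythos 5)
Your overall skeleton (continuous i.i.d.\ labels, first moment $D^k/k!$, a truncated second moment in the style of branching random walks) matches the paper's, but two of your key claims are wrong, and together they constitute a genuine gap. First, your barrier points the wrong way. You diagnose the bad pairs as those whose shared prefix has "atypically large" weights and propose to require $X_j \lesssim j/k$; in fact a prefix ending at a \emph{large} value $u$ leaves \emph{less} room $[u,1]$ for the two suffixes and so contributes \emph{less}. The dominant overlap contribution comes from prefixes ending at atypically \emph{small} $u$: for overlap $j$ the integrand $u^{j}(1-u)^{2i}$ peaks at $u=j/(j+2i)$, below the typical $j/k$. An upper barrier does not exclude these pairs, so your claimed summability of the overlap terms fails. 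The paper imposes the opposite, lower barrier $X_i\ge \frac{i}{k}X_k$ (whose cost, exactly a factor $1/k$ in the first moment, is computed via Spitzer's cycle lemma), which forces the last shared label to be at least $\frac{k-i}{k}(1-\delta-\frac1k)$ and thereby confines the second suffix to a short interval.

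Second, the conclusion $\E[\tilde Z^2]\le(1+o(1))\E[\tilde Z]^2+\E[\tilde Z]$ is false no matter how the barrier is chosen, so Paley--Zygmund cannot give $1-o(1)$ here. Already the pairs of paths sharing only the first edge number about $D^{2k-1}$ and are both increasing with probability $\sim \frac{k}{2}\cdot(1/k!)^2$, contributing $\sim\frac{k}{2D}\E[Z]^2=\frac{e(1-\eps)}{2}\E[Z]^2>\E[Z]^2$ to the second moment; the truncation cannot remove these near-diagonal terms. The second moment method therefore yields only $\P[\tilde Z>0]\ge c/\mathrm{poly}(k)$ (the paper gets $c/k^{3/2}$), and an extra amplification step is essential: the paper works in $T_D^{k+2}$, finds whp $\Theta(\eps^2D^2)$ length-two increasing starts with labels below $\eps/2$ at the root, and uses the independence of the disjoint depth-$k$ subtrees they lead into (each succeeding with probability $\ge c/k^{3/2}$, and $D^2\gg k^{3/2}$) to boost the success probability to $1-o(1)$. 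Your proposal is missing this step entirely, and without it the argument does not prove the "whp" statement.
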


\begin{proof} Note that it is enough to consider the case where $\varepsilon$ is small (for larger values, we actually prove a stronger statement). It will be convenient for us to consider a random bijection $f:E(T_D^k)\rightarrow \{1,\ldots,e(T_D^k)\}$ as follows: for every edges $e\in E(T_D^k)$ we assign a random variable $X(e)$, uniform in $[0,1]$, where all the variables are independent. With probability $1$ all the labels are distinct and therefore the $X(e)$'s naturally define $f$ by assigning the labels $\{1,\ldots,e(T_D^k)\}$ to the edges according to the natural ordering of the $X(e)$'s. Clearly, the obtained $f$ is a uniformly chosen bijection.

Let us first observe that the constant $e$ in the lemma is best possible. Indeed, the expected number of increasing paths from the root to some leaf is $D^k\frac{1}{k!}\approx \left(\frac{eD}{k}\right)^k$, and this clearly goes to $0$ whenever $k\geq (1+\varepsilon)eD$.

  Now, consider the number $Y$ of paths from the root to some leaf of $T^{k}_D$ along which the labels are increasing and satisfy $X(e)<1-\eps/2$.
  In order to prove the result it suffices to show that there exists a constant $c>0$ (that may depend on $\varepsilon$) such that  for $k\le (1-\varepsilon) eD$,
  \begin{equation}
    \label{eq:1}
    \mathbb P[Y\ge 1]\ge \frac{c}{k^{3/2}}.
  \end{equation}
  Indeed, if we replace $Y$ by a random variable  $Y'$ which counts the number of paths from the root to some leaf of $T^{k}_D$ along which the labels are increasing and satisfy $X(e)>\eps/2$, we obtain that $Y'$ has the exact same distribution as $Y$. Moreover, whp\ the root of $T^{k+2}_D$ has at least $\eps^2 D^2/9$ paths of length $2$ with labels $0<a<b<\eps/2$ (the expected number of such paths is $(D^2/2)(\eps/2)^2$).
  Then, the estimate above shows that each of these short paths has probability larger than $c/{k^{3/2}}$ to be extendable into an increasing path to some leaf of $T^{k+2}_D$. Since these trees are disjoint, by independence, we obtain that whp\ there exists  an increasing path from $r$ to some leaf of $T^{k+2}_D$.   

  Let us now turn to the proof of \eqref{eq:1}.
  Applying the second-moment method to $Y$ naively fails, since if we condition on two paths with several common edges from the root to some leaves to be increasing, the labels along the common edges will be very different from two independent paths conditioned to be increasing.
  This leads to a dominant contribution to the second moment from paths which are very different from typical increasing paths (it is worth mentioning that the naive approach gives a constant $2$ instead of $e$ in the lemma, which is already non-trivial).

  To overcome this problem, we introduce below the notion of `good paths' which are increasing paths with some additional restrictions on the labels.
  Let $(X_1,\ldots,X_k)$ be the labels along a fixed path from the root to some leaf of $T^k_D$.
  For $\delta=\varepsilon/2$, say that this path is \emph{good} if the labels satisfy
  \begin{enumerate}[$1.$]
  \item Monotonicity: $X_1\le\cdots\le X_k$,
  \item The last label satisfies $1-\delta-\frac1k \le X_k\le 1-\delta$, and
  \item A lower bound: for every $1\le i\le k$, $X_i\ge \frac i k X_k$.
  \end{enumerate}
  We will apply a second moment method to show that the number $Z$ of good paths is positive with probability larger than $c/k^{3/2}$. The result will then follow from the fact that $Y\geq Z$ (which holds deterministically).

  We begin with the computation of the probability of a fixed path to be good. We use a standard trick which is based on Spitzer's Lemma (see \cite{Spitzer}). Consider the labels $(X_1,\ldots,X_k)$ along the path to some fixed leaf of $T^k_D$. Conditional on the event
$X_1\le\cdots\le X_k$ and on the value of $X_k$, the law of the increments $I_i=X_{i+1}-X_i$ (where $X_0=0$) 
is invariant under cyclic permutations. In other words, under $\P[\cdot | X_1\le\cdots\le X_k,X_k]$ we have
\begin{equation}
 (I_1,\ldots,I_k) \overset{\text{law}}{=} (I_{c(1)},\ldots,I_{c(k)})
\end{equation}
for every cyclic permutation $c$ of $\{1,\ldots,k\}$. Now it is easy to show (see \cite{Spitzer}) that for any outcome
there exists exactly one cyclic ordering of these increments with $\sum_{i\leq j}I_{i}\geq \frac{j}{k}X_k$ for all $j$.
Hence, the conditional probability for $3.$ to hold is $1/k$.

  From the discussion above we have
  \begin{align}
    \P[ \text{ the path is good}]&=\frac{1}{k} \P[X_1 \le \dots \le X_k,\, 1-\delta-\tfrac1k \le X_k\le 1-\delta]\notag\\
    &=\frac{1}{k}\cdot \frac1{k!}\left[(1-\delta)^k-(1-\delta-\tfrac 1k)^k\right]. \label{eq:2}
  \end{align}
  Note that $(1-\delta)^k-(1-\delta-1/k)^k\leq (1-\delta)^k$ and that for a small enough $\delta$ we have
  $$(1-\delta)^k-(1-\delta-1/k)^k\geq (1-\delta)^k(1-1/(1-\delta)k)^k\geq (1-\delta)^k(1-e^{-(1+o(1))/(1-\delta)})\geq (1-\delta)^k/2.$$ Therefore, combining these estimates with \eqref{eq:2} we find that
  \begin{equation}
    \label{eq:9}
    \frac {1} {2k \cdot k!} (1-\delta)^k \le \P[\text{ the path is good}] \le\frac 1 {k \cdot k!}(1-\delta)^k.
  \end{equation}
  
  Since the expectation of $Z$ is equal to $D^k\cdot \P[E_k]$, we obtain, using Stirling's formula, that
  \begin{equation}
    \label{eq:3}
    \E[Z]\ge \frac {D^k}{2k\cdot k!}  (1-\delta)^k \geq C Q^k k^{-3/2},
  \end{equation}
  where $Q = \frac{De(1-\delta)}{k}$, and $C$ is some absolute constant.

  We now bound the second moment of $Z$.
  Consider two paths in $T^k_D$, say $(e_1,\dots,e_k)$ and $(h_1,\dots,h_k)$.
  Suppose the two paths have $k-i$ common edges, so that $e_{k-i}=h_{k-i}$ is the last common edge.
  If the $e$ path is good, then $X(e_{k-i})\ge \frac {k-i}k X(e_k)\ge \frac {k-i}k(1-\delta-\tfrac1k)$.
  Conditionally on the $e$ path being good, the $h$ path is good with probability smaller than
  \begin{equation*}
    \P\left[\frac {k-i} k(1-\delta-\tfrac 1k)\le X(h_{k-i+1})\le \cdots\le X(h_k)\le 1-\delta \right].
  \end{equation*}
  The variables are increasing with probability $1/i!$ and are all in the necessary interval with probability at most
  $$\left((1-\delta)-(1-\frac{i}{k})(1-\delta-\frac{1}{k})\right)^{i}\leq \left(\frac{i(1-\delta)+1}{k}\right)^i\leq e^{1/(1-\delta)} \left(\frac{i(1-\delta)}{k}\right)^i.$$
 Hence,
  \[
    \P[\text{$h$ is good} | \text{$e$ is good}] \leq
    e^{1/(1-\delta)}\frac{1}{i!} \left(\frac{i(1-\delta)}{k}\right)^i \leq
    e^{1/(1-\delta)} \left(\frac{e(1-\delta)}{k}\right)^i.
  \]
  The number of pairs of paths with $k-i$ common edges is bounded by $D^{k+i}$, and so
  \begin{align}
    \E[Z^2]
    &\leq \sum_{i=0}^k e^{1/(1-\delta)} D^{k+i} \P[E_k] \left(\frac{e(1-\delta)}{k}\right)^i\\
    &= e^{1/(1-\delta)} \E[Z] \sum_{i=0}^k Q^i.
  \end{align}

  Recall that we set $\delta=\eps/2$. In this case we have that $Q \geq \frac{1-\delta}{1-\eps} > 1$ which implies that the sum is within a constant factor with its last term. Using \eqref{eq:3} we obtain
  \begin{equation}
    \label{eq:E_Z2_bound}
    \E[Z^2] =O(\E[Z]^2 k^{3/2})
  \end{equation}
  (with constant depending on $\eps$).
  The Cauchy-Schwarz inequality implies that for some $C'>0$ we have
  \begin{equation}
    \label{eq:5}
    \mathbb P[Z\ge1]\ge \frac{\E[Z]^2}{\E[Z^2]}\ge \frac 1{C'k^{3/2}}.
  \end{equation}
  Since $Y\ge Z$ deterministically, the equation above trivially implies Equation~\eqref{eq:1}.
\end{proof}

\paragraph{Remark.}
The bound of $\frac{c}{k^{3/2}}$ in \eqref{eq:1} can be improved to $c/k$ by applying Stirling to $i!$ above.

\section{Proof of Theorem \ref{main}}

In this section we prove our main result. As noted in the introduction (before the statement of the theorem), the upper bound follows by a simple union bound, so we only need to address the lower bound.
The main idea is to partition the graph into several subgraphs $G_i$ with consecutive values of edge weights.  In each of these $G_i$, we find with high probability many reasonably long increasing trails.  In order to combine these, we leave aside a smaller number of the edges between the edges of $G_i$ and $G_{i+1}$.  We then argue that with high probability the end of any trail in $G_i$ is connected to the beginning of some trail in $G_{i+1}$ by one of these edges. This allows us to stitch together the individual trails to a single long trail.  We proceed to make this precise.

\medskip

Fix $\varepsilon>0$ and $p=\omega(\log n/n)$. Let $G=G_{n,p}$ and let $f$ be a random bijection as in the assumptions of the theorem. Our goal is to show that whp $G$ contains an $f$-increasing path of length at least $(1-\varepsilon)enp$. Note that we may further assume that $p\leq 1-\varepsilon/10$. Indeed, assume $p$ is larger, and replace it by $p'=1-\varepsilon/10$. This gives us an increasing trail of length at least $(1-\varepsilon)enp'\geq (1-2\varepsilon)np$, and by re-scaling we obtain the result.

Before describing our algorithm, we need some preparation. First, expose the number of edges $m=e(G_{n,p})$ (but not the edges themselves). Note that whp we have $m=(1/2+o(1))n^2p$. Second, note that we can choose $t:=t(n)$, $a:=a(n)$, and $b:=b(n)$ such that $t:=(1-o(1))\frac{np}{\log^{0.5}n}$, $a=n\log\log n$, $b=(1-o(1)) \frac n 2\log^{0.5}n$, and $(a+b)t=m$. Partition $[m]$ into $2t$ consecutive and disjoint intervals $[m]=I_1\cup J_1\cup I_2\cup J_2\ldots \cup I_t\cup J_t$ in such a way that $|I_i|=a$ and $|J_i|=b$ for all $i$. For each $i$, let $H_i$ and $G_i$ be the subgraphs of $G_{n,p}$ induced by the edges with labels from $I_i$ and $J_i$, respectively. Clearly, $G_i\overset{\text{law}}=G_{n,a}$ and $H_i\overset{\text{law}}=G_{n,b}$ for all $i$, where for a fixed integer $x$, $G_{n,x}$ is a graph on $n$ vertices with exactly $x$ edges, chosen uniformly at random among all such possible graphs (note that these graphs have disjoint edge sets, and so are not independent).

Now we are ready to describe our algorithm. The algorithm consists of $t$ \emph{rounds}, where each round consists of two \emph{steps}, one of which is performed within $G_i$ and the other within $H_i$. After each round $i$ we obtain an increasing trail $T_i$ for which:
\begin{enumerate}
\item $T_{i-1}\subseteq T_{i}$ (that is, $T_{i-1}$ is an initial segment of $T_i$); and
\item $E(T_i)\subseteq \bigcup_{j=1}^i \left(E(G_j)\cup E(H_j)\right)$; and
\item  either $T_i=T_{i-1}$, and in this case we consider the $i$th round as a \emph{failure}, or the \emph{length} of $T_i$, denoted as $\ell(T_i)$, satisfies $\ell(T_i)\geq \ell(T_{i-1})+s$, where $s$ will be determined below.
\end{enumerate}

Our goal is to prove that whp $\ell(T_t)\geq (1-\varepsilon)enp$, which is equivalent to
$$\ell(T_0)+\sum_{i=1}^t \left(\ell(T_i)- \ell(T_{i-1})\right)\geq (1-\varepsilon)enp.$$

Initially, $T_0=\emptyset$. Suppose that we are at the beginning of round $i\geq 1$, and $T:=T_{i-1}=v_1\ldots v_x$ satisfies the three properties as defined above. Expose all the edges of $G_i$ without assigning them with the exact labels of $f$ (recall that all its labels are taken from the interval $J_i$). By Lemma \ref{cor:high girth} we know that whp there exists a subgraph $G'_i\subseteq G_i$ with $|V(G'_i)|\geq (1-o(1))n$, $\delta(G'_i)\geq d:=(1-\varepsilon/2)2a/n$ and with girth at least (say) $k=\log^{0.9}n$. Therefore, all the vertices in $G'_i$ serve as roots of some $d$-ary tree of depth $k$. Note that if such a $G'$ does not exists, then this round is a failure and we set $T_i=T$.

Now, exposing the exact values of $f$ on $E(G_i')$, by Lemma \ref{monotone path in one step} and Markov's inequality we obtain that whp there exists a subset $U_i$ of vertices of size $(1-o(1))n$, such that for all $u\in U_i$ there exists an $f$-increasing path of length $(1-\varepsilon/2)ed$ with $u$ as its starting point. Again, if there is no such set then we declare the $i$th round as a failure and set $T_i=T$. As all its labels are taken from $J_i$, it follows that all its labels are larger than the labels of $T$. Finally, expose the edges (and labels) of $H_i$. In the following claim we show that whp there exists a vertex $u\in U_i$ for which $v_xu\in E(H_i)$ (if not we declare this round as a failure). Suppose it is true, and let $Q$ denote an $f$-increasing trail in $G'_i$ with $u$ as its starting point. Define $T_{i}=v_1\ldots v_xuQ$ and observe that $T$ is an $f$-increasing trail of length at least $x+(1-\varepsilon/2)ed$ which extends $T_{i-1}$. Therefore, we can choose $s=(1-\varepsilon/2)ed$ in order to satisfy property $2$. In the case $T_{i-1}=\emptyset$, the ``gluing'' step is useless and we can simply set $T_i=Q$ starting from an arbitrary point.
 \begin{claim}\label{claim1}
   With high probability $H_i$ contains an edge from $v_x$ to $U_i$.
 \end{claim}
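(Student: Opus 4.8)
The plan is to condition on everything that determines the vertex $v_x$ and the set $U_i$, and then to use the remaining randomness of $H_i$ to produce the desired edge. Recall that $v_x$ is the endpoint of $T_{i-1}$, which is built from the edges and labels in the blocks $I_1,J_1,\dots,I_{i-1},J_{i-1}$, while $U_i$ is determined by $G_i$, i.e.\ by the block $J_i$. Since $H_i$ consists precisely of the $a=n\log\log n$ edges whose labels lie in $I_i$, both $v_x$ and $U_i$ are measurable with respect to the edges in all blocks other than $I_i$. First I would record the exact conditional law of $H_i$ given all the other blocks: having exposed the $m-a$ edges of $G$ lying in the other blocks, the uniformity of $G_{n,m}$ together with the uniform random assignment of edges to blocks implies that the $a$ edges of $H_i$ form a uniformly random $a$-subset of the set of \emph{available} pairs, namely the $N:=\binom n2-(m-a)$ pairs that are not already edges in one of the other blocks.

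Next I would lower bound the number of ``good'' pairs, meaning pairs $\{v_x,u\}$ with $u\in U_i$ that are still available. There are at least $|U_i|-1\ge(1-o(1))n$ pairs $\{v_x,u\}$ with $u\in U_i\setminus\{v_x\}$, and at most $\deg_G(v_x)$ of them are edges of $G$, hence at most $\deg_G(v_x)$ lie in other blocks and are unavailable. Using that whp $\Delta(G_{n,p})\le(1+o(1))np$ — which follows from Chernoff's inequality and a union bound over the $n$ vertices, valid since $np=\omega(\log n)$, and which applies to the adaptively chosen $v_x$ because it holds simultaneously for all vertices — together with the reduction $p\le 1-\eps/10$ made at the start of the section, the number of good pairs is at least $(1-o(1))n-(1+o(1))np\ge(\eps/10-o(1))n=\Omega(n)$. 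The same reduction gives $N\ge\binom n2(1-p)-o(n^2)=\Omega(n^2)$, and trivially $N=O(n^2)$, so $N=\Theta(n^2)$.

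Finally I would estimate the probability that none of the $a$ uniformly chosen edges of $H_i$ is good. Writing $S$ for the set of good pairs, this probability equals $\binom{N-|S|}{a}/\binom Na\le(1-|S|/N)^a\le\exp(-a|S|/N)$. Plugging in $a=n\log\log n$, $|S|=\Omega(n)$ and $N=\Theta(n^2)$ yields $a|S|/N=\Omega(\log\log n)\to\infty$, so this probability is $\exp(-\Omega(\log\log n))=o(1)$. Hence whp $H_i$ contains a good edge, that is, an edge from $v_x$ to $U_i$, which is exactly the claim.

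The main obstacle, and the only place where real care is needed, is the first step: because $G_{n,p}$ has a fixed number of edges once $m$ is exposed and the blocks partition these edges, $H_i$ is \emph{not} an independent copy of a random graph with $a$ edges, and conditioning on the other blocks removes the $m-a$ pairs used there from the pool available to $H_i$. Controlling this is exactly why the reduction to $p\le 1-\eps/10$ is needed: it guarantees that a positive fraction of all pairs, and in particular $\Omega(n)$ of the pairs from $v_x$ into $U_i$, remain available, so that the conditional $a$-out-of-$N$ sampling still hits a good pair with probability $1-o(1)$.
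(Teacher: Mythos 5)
Your proof is correct and follows essentially the same route as the paper's: bound the number of still-available pairs between $v_x$ and $U_i$ from below by $\Omega(n)$ using the whp maximum-degree bound together with the reduction to $p\le 1-\eps/10$, and then bound the probability that the $\omega(n)$ uniformly sampled edges of $H_i$ all miss this set via the hypergeometric estimate $\binom{N-|S|}{a}/\binom{N}{a}\le e^{-a|S|/N}=o(1)$. You are in fact slightly more careful than the paper in spelling out the conditional law of $H_i$ given the other blocks (sampling from the $N=\Theta(n^2)$ available pairs rather than from all of $\binom n2$), but this is the same argument.
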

 \begin{proof} Note that the edges of $H_i$ are being chosen uniformly at random among the non-edges of previous $G_j$'s and $H_j$'s. Moreover, (recall that we assume $p\leq 1-\varepsilon/10$) as whp we have $d_G(v_x)\leq (1+o(1))np\leq (1-\varepsilon/100)n$ and $|U_i|-d_G(v_x)\geq \varepsilon n/200$, it follows that there are at least $\varepsilon n/200$ `free' edges between $v_x$ and $U_i$. Recall that we work in $H_i=G_{n,b}$ so the probability for not having an edge between $v_x$ and $U_i$ is at most
 $$\frac{\binom{\binom{n}{2}-\varepsilon n/200}{b}}{\binom{\binom{n}{2}}{b}}\leq
 \left(1-\frac{\varepsilon n/200}{{n \choose 2}}\right)^b=e^{-\Theta(b/n)}=o(1),$$
 where we use that $b=\omega(n)$ and that for any $p>r>q$, ${p-q \choose r}/{p \choose r}\leq (\frac{p-q}{p})^r$.
 \end{proof}
To summarize, by Markov's inequality, whp there are at most $o(t)$ rounds which are considered as failures. Therefore, in at least $t-o(t)$ rounds, the length of the current trail $T_i$ extends by $s$. Moreover, as $s\geq (1-\varepsilon/2) 2a/n$ we obtain that whp $\ell(T_t)\geq (1-\varepsilon)e2at/n\geq (1-2\varepsilon)enp$. This completes the proof.
\hfill $\Box$

\section{Proof of Theorem \ref{main:paths}}

The proof of Theorem \ref{main:paths} is more or less identical to the proof of Theorem \ref{main}. The only difference is that in order to obtain a path (as opposed to a trail), we need to restrict ourselves to trees which are vertex disjoint from our `current' path $P_{i-1}$ (which plays the role of $T_{i-1}$ in the proof of Theorem \ref{main}). Here we are using the fact that $p=o(1)$, so the total length of the path that we are trying to construct is at most $enp=o(n)$, and therefore, in each step we still have $(1-o(1))n$ `available' vertices to work with (that is, vertices which are not used in our current path). Under this restriction, the rest of the strategy and the calculations are basically the same as in Theorem \ref{main} so we omit the details. \hfill $\Box$

{\bf Acknowledgments} The authors are grateful to Oliver Riordan and to the anonymous referees for many valuable comments.

\end{document}